\documentclass[12pt]{amsart}
\usepackage[letterpaper,margin=1.1in]{geometry}

\usepackage{graphicx,color,amsmath,amssymb,mathrsfs}
\usepackage{amsfonts,amsthm,epsfig,epstopdf, esint,soul, mathtools}
\usepackage[mathscr]{euscript}

\newtheorem{theorem}{Theorem}[section]
\newtheorem{lemma}[theorem]{Lemma}
\newtheorem{proposition}[theorem]{Proposition}
\newtheorem{corollary}[theorem]{Corollary}
\newtheorem{remark}[theorem]{Remark}
\newtheorem{definition}[theorem]{Definition}

\usepackage[utf8]{inputenc}
\usepackage[english]{babel}

\newcommand{\R}{{\mathbb{R}}}
\newcommand{\e}{{\epsilon}}
\newcommand{\g}{{\gamma}}

\newcommand{\M}{{\mathcal{M}}}

\newcommand{\vphi}{\varphi}

\newcommand{\na}{{\nabla}}
\newcommand{\B}{\mathcal{B}}

\newcommand{\vol}{{\rm{vol}}}

\allowdisplaybreaks[4]
\title[Quantitative Stability]{Quantitative Stability for Minimizing Yamabe Metrics with minimal boundary}
\begin{document}
\author{Runze Lin}
\address{Institut de Mathématiques de Toulouse, Université Paul Sabatier, 118, route de Narbonne, 31062 Toulouse
Cedex, France}
\email{runze.lin@math.univ-toulouse.fr}
\author{Bao Yu }
\address{School of Mathematical Sciences, University of Science and Technology of China, Hefei, Anhui Province, P. R. China, 230006}
\email{baoyu1@mail.ustc.edu.cn}

\begin{abstract}
	In this paper, we investigate the stability of minimizing Yamabe metrics on compact manifolds with boundary, in the sense introduced by Escobar. We show that if a function nearly minimizes the Yamabe energy, then the associated conformal metric is quantitatively close to a minimizing Yamabe metric within its conformal class. Moreover, this closeness is controlled by an appropriate power of the Yamabe energy deficit.
	\vskip0.3cm
	\noindent{\bfseries Keywords:}{ Yamabe metrics, quantitative stability, minimal boundary, Lyapunov-Schmidt reduction,  Lojasiewicz distance inequality.}\\
\end{abstract}
\maketitle
\section{Introduction}
    Let $(M^n,g)$ be a closed manifold  of dimension $n\ge 3$. The Yamabe problem is to find a metric  $\tilde{g}$, conformal to $g$, such that the scalar curvature of $\tilde{g}$ is a constant. Given a metric $\tilde{g}$ conformal to $g$, i.e.  $\tilde{g} = u^{4/(n-2)} g$ for  a smooth positive function $u$ on $M$, the scalar curvature $R_{\tilde{g}}$ of $\tilde{g}$ is given in terms of $u$ and the scalar curvature $R_g$ of $g$  by
\begin{equation}\label{eqn: comformal change}
R_{\tilde{g}} = u^{1-2^*}\left(-\frac{4(n-1)}{n-2} \Delta u + R_g u \right),
\end{equation}
where $2^* = 2n/(n-2)$. Hence, this problem is to find  a positive solution to the following equation
\begin{equation}\label{eqn: yamabe equation}
    -\Delta u + c_nR_g u = C u^\frac{n+2}{n-2},
\end{equation}
where $c_n =\frac{(n-2)}{4(n-1)}$. In particular, it can be viewed as a variational problem. Solutions to the Yamabe problem are critical points of the following Yamabe energy functional
\begin{equation}\label{eqn: Yamabe functional}
Q(u) =
\ \frac{ \int_M  |\na u|^2 + c_n R_g u^2 \, d\vol_g}
  {\left(\int_M u^{2^*}\, d\vol_g\right)^{2/2^*}} =\frac{c_n\int_M R_{\tilde{g}}\, d\vol_{\tilde{g}} } 
{ \vol_{\tilde{g}}(M )^{2/2^*}}
 \end{equation}
and the Yamabe constant of $(M^n,g)$ is defined as
\begin{equation}
    Y(M,[g])=\inf\{Q(u):u\in W^{1,2}(M), u\ge0\}.
\end{equation}
It is well known that $Y(M,[g])$ is invariant under a conformal change of the metric $g$.
\\

   The Yamabe problem was solved by the combined efforts of the works of Yamabe\cite{Ya60},
   Trudinger\cite{Tr68}, Aubin\cite{Au76a} and Schoen\cite{Sc84}. Yamabe's proof is based on the variational approach. In \cite{Ya60}, he attempted to prove that $Y(M,[g])$ is always achieved. However, Trudinger\cite{Tr68} pointed out that Yamabe's proof is correct only when the Yamabe constant $Y(M,[g]) \leq 0$, and fixed it when $Y(M,[g]) < \epsilon$ for some constant $\epsilon$ (depending on the Sobolev constant). In \cite{Au76a}, Aubin proved the existence under the condition 
   \begin{equation}\label{eq1}
       Y(M,[g]) < Y(S^n,[g_0]),
   \end{equation}
   where $(S^n,g_0)$ denotes the standard metric of the $n$-sphere. Aubin also verified \eqref{eq1} for the case $n \ge 6$ and $(M,g)$ is not locally conformally flat. Schoen\cite{Sc84} established \eqref{eq1} for the remaining cases by the positive mass problem (see\cite{SY79}, \cite{SY88}). A complete proof of the affirmative answer to the Yamabe problem was eventually obtained.

   Once the existence of minimizers is solved, it is natural to ask the stability: if $Q(u)$ is close to the infimum $Y(M,[g])$, to what extent will it affect the distance from $u$ to the minimizers?

   In \cite{BL85}, Brezis and Lieb raised this question on the sphere, asking that given a function $u \in W^{1,2}(S^n)$, whether the energy deficit $Q(u)- Y(S^n,[g_0])$ controls the distance from $u$ to the family of minimizers. By means of explicit characterization of the minimizers on the round sphere by Aubin\cite{Au76b} and Talenti\cite{Ta76}(see also Obata\cite{Ob71}), Bianchi and Egnell give a positive answer: for any nonnegative $u \in W^{1,2}(S^n)$
   \begin{equation}\label{eqn: BE}
   Q_{(S^n,g_0)}(u) -Y(S^n, [g_0]) \geq c  \left(\frac{\inf \left\{ \| u-v\|_{W^{1,2}(S^n)}\ | \  {v \in \mathcal{M}_{(S^n,g_0)}} \right\}}{\| u \|_{W^{1,2}(S^n )} }\right)^2,
   \end{equation}
   where $\mathcal{M}$ denotes all minimizers of the functional $Q$, $c$ only depends on the dimension, and the exponent $2$ is sharp under the 
   $W^{1,2}$ norm. However, for a general closed Riemannian manifold, the minimizers are not known in any explicit form. By elegantly integrating the Lyapunov-Schmidt reduction with the \L ojasiewicz distance inequality, Engelstein, Neumayer, and Spolaor\cite{ENL22} derived the following quantitative stability result.
    \begin{theorem}\cite{ENL22} \label{ENL22}
 	Let $(M^n,g)$ be a $C^\infty$ closed Riemannian manifold of dimension $n\geq 3$ that is not conformally equivalent to the round sphere. There exist constants $c>0$ and  $\g\ge 0 $, depending on $(M,g)$, such that 
 	\begin{equation}
 	Q(u) - Y(M,[g]) \geq c\, d(u, \mathcal{M})^{2+\g}\, \qquad \forall u \in W^{1,2}(M;\R_+)  \,.
 	\end{equation}
 	Moreover, there exists an open dense subset $\mathcal G$ in the $C^2$ topology on the space of $C^\infty$-conformal classes of metrics on $M$ such that if $[g] \in \mathcal G$, we may take $\g=0$.
   \end{theorem}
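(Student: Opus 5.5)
The argument has two stages: a local estimate near each minimizer, and a compactness step that globalizes it. We normalize $\|u\|_{L^{2^*}}=1$ and use the scale-invariance of $Q$, so that $d(u,\M)$ is the $W^{1,2}$ distance to the set $\M$ of normalized minimizers.

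\emph{Local-to-global reduction.} We first show that for every $\delta>0$ there is $\eta>0$ such that $Q(u)-Y(M,[g])<\eta$ implies $d(u,\M)<\delta$. Take a minimizing sequence $u_k\ge0$. By the concentration-compactness (profile decomposition) for the Yamabe functional, $u_k$ either converges strongly in $W^{1,2}$ to a minimizer or develops bubbles. Since $(M,g)$ is not conformally equivalent to the round sphere, the Aubin--Schoen strict inequality \eqref{eq1}, $Y(M,[g])<Y(S^n,[g_0])$, holds. Any bubble would cost at least $Y(S^n,[g_0])^{n/2}$ of energy, which is incompatible with \eqref{eq1}, so no bubbling occurs and $u_k$ converges strongly. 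Hence it suffices to prove the inequality for $d(u,\M)<\delta$; far from $\M$ the deficit is bounded below by $\eta$, and $d(u,\M)\le C$ trivially, so the estimate holds there too. Moreover, $\M$ is compact in $C^{2,\alpha}$ by elliptic regularity, so it is enough to work near a fixed minimizer $v\in\M$, with uniform constants obtained by a covering argument.

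\emph{Lyapunov--Schmidt reduction near $v$.} Let $L=-\Delta+c_nR_g-(2^*-1)Yv^{2^*-2}$ be the linearization of the Euler--Lagrange equation at $v$, where $\int v^{2^*}=1$. This operator is Fredholm and self-adjoint on $L^2(v^{2^*-2}d\vol_g)$ with a finite-dimensional kernel $K$. Applying the implicit function theorem to the projection of $Q'$ onto $K^\perp$, we obtain an analytic map $\Phi:K\cap B_\rho\to W^{1,2}$ with $\Phi(\phi)=v+\phi+\Psi(\phi)$, $\Psi(\phi)\perp K$ and $\Psi=O(|\phi|^2)$, such that $\Pi_{K^\perp}Q'(\Phi(\phi))=0$. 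Set $f(\phi)=Q(\Phi(\phi))$. Because the nonlinearity $u^{2^*-1}$ is real analytic near $v>0$, $f$ is real analytic on a finite-dimensional ball. Critical points of $f$ correspond to critical points of $Q$ near $v$, and $\M\cap B_\rho(v)=\Phi(\{f=Y\})$, since the points where $f$ attains its minimum $Y$ locally are exactly the minimizers. For a given $u$ near $v$, choose $\phi$ as the $K$-projection of $u-v$ and write $u=\Phi(\phi)+w$ with $w\perp K$. Expanding $Q$ around $\Phi(\phi)$ and using the coercivity of $L$ on $K^\perp$ (which holds because the minimizer has nonnegative second variation, $L\ge0$, and the modes already killed by the normalization do not contribute) gives
\begin{equation*}
Q(u)-Y\ \ge\ f(\phi)-Y+c\|w\|^2_{W^{1,2}}.
\end{equation*}
The cross term vanishes to first order because $Q'(\Phi(\phi))$ lies in $K$ while $w\perp K$.

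\emph{\L ojasiewicz step (main obstacle).} Since $f-Y\ge0$ is analytic, the \L ojasiewicz distance inequality gives $f(\phi)-Y\ge c\,\mathrm{dist}(\phi,\{f=Y\})^{\alpha}$ for some $\alpha\ge2$. Then
\begin{equation*}
d(u,\M)\le\|w\|+C\,\mathrm{dist}(\phi,\{f=Y\}),
\end{equation*}
and using $\|w\|\le C$ we obtain $Q(u)-Y\ge c\,d(u,\M)^{2+\g}$ with $\g=\alpha-2$. The delicate part is carrying out the expansion uniformly, with the $O(\|w\|^3)$ and mixed $\phi$--$w$ errors absorbed, since $2^*-1$ may be less than $2$ and the nonlinearity is then only $C^{1,\beta}$ at $0$. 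This is handled by using positivity of $v$ and Hölder continuity of $t\mapsto t^{2^*-1}$ to get errors of order $\|w\|^{2+\beta}$.

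\emph{Generic case.} For generic $[g]$, every minimizer is nondegenerate, i.e.\ $K=\{0\}$ (a transversality argument via the Sard--Smale theorem applied to the map $(g,u)\mapsto$ Yamabe equation), so $f$ is trivial and $\g=0$. This nondegeneracy is open in $C^2$ by stability of invertibility, and dense by local conformal perturbations of the metric.
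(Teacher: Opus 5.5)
The paper does not prove this statement; it quotes it from \cite{ENL22}. Its proof of the boundary analogue, Theorem~\ref{thm: minimizers}, follows the \cite{ENL22} scheme, and your argument for the inequality runs along the same lines: compactness of minimizing sequences from the strict Aubin--Schoen inequality, a Lyapunov--Schmidt reduction at each $v\in\M_1$, \L ojasiewicz for the reduced analytic function, and a finite cover of $\M_1$. Your use of the distance to the zero set $\{f=Y\}$ is, if anything, cleaner than the paper's distance to the critical set of $q$, because it lands directly on minimizers.

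\textbf{The local step.} As written, it does not actually remove the scale invariance of $Q$. At a normalized minimizer, for all $\vphi,\psi\in W^{1,2}(M)$,
\[
\na^2Q(v)[\vphi,\psi]=2\langle L\vphi,\psi\rangle+2(2^*-2)\,Y\Big(\int_M v^{2^*-1}\vphi\Big)\Big(\int_M v^{2^*-1}\psi\Big),
\]
and this form annihilates $v$. For $Y\neq0$, every element of $K=\ker L$ lies in $T_v\B$. Hence $v-\pi_Kv$ is a nonzero element of $K^\perp$ in the kernel of this Hessian, and the implicit function theorem for $\Pi_{K^\perp}Q'=0$ on $K^\perp$ does not apply.

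Likewise, $L\ge0$ is false: $\langle Lv,v\rangle=-(2^*-2)Y<0$ when $Y>0$. Nonnegativity holds only on $T_v\B$. Since $\Phi(\phi)$ is unnormalized and $w=u-\Phi(\phi)$ is merely orthogonal to $K$, the bound $Q(u)-Y\ge f(\phi)-Y+c\|w\|^2$ is not justified by what you cite.

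The fix is the paper's chart. Write $u=\Psi(\xi)$ with $\xi\in T_v\B$; by scale invariance $\hat Q(\xi)=Q(v+\xi)$, and $\na^2\hat Q(0)=2\langle L\cdot,\cdot\rangle|_{T_v\B}$ is nonnegative with kernel $K\cap T_v\B$. Then split $\xi$, not $u-v$, as $\phi+w$ with $w\in K^\perp\cap T_v\B$.

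The reduction must also be set up in $C^{2,\alpha}$, as the paper does. The map $u\mapsto u^{2^*-1}$ is analytic on a sup-norm neighbourhood of $v>0$, but not on $W^{1,2}$-neighbourhoods, which contain functions vanishing on small balls; for $n>6$ it is not even $C^2$ from $W^{1,2}$ to $H^{-1}$. So analyticity of $f$ does not come out of a $W^{1,2}$ implicit function theorem. For the Taylor remainder you need no $\|w\|^{2+\beta}$ bound: $W^{1,2}$-continuity of the Hessian (the analogue of \eqref{prop2}) suffices.

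\textbf{The generic part.} Your sketch does not prove it, and the density step is wrong as stated. A conformal change of $g$ does not change $[g]$. Degeneracy of a minimizer is conformally invariant: the Hessians of $Q_g$ at $v$ and of $Q_{\rho^{4/(n-2)}g}$ at $v/\rho$ are conjugate by multiplication by $\rho$. So ``local conformal perturbations'' cannot give density; you must perturb the conformal class itself.

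The substance of a Sard--Smale argument is then to show that $(g,u)\mapsto L_gu-\lambda u^{2^*-1}$ is a submersion at every solution. That is, the metric derivative of the equation must pair nontrivially with every nonzero kernel element of the linearization, and this is exactly what you leave out.

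Openness likewise needs more than stability of invertibility at a single $v$. It needs compactness of minimizers uniformly for nearby classes, i.e.\ continuity of $Y$ and the strict inequality for $g'$ near $g$.

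The paper gives no argument for this part either. It quotes the statement from \cite{ENL22}, attributes generic nondegeneracy to Schoen \cite{Sc91}, and leaves the boundary analogue open.
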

   Furthermore, their result is optimal, as the existence of manifolds with a strictly positive exponent $\gamma$ is proven.
   
   Motivated by the question mentioned above, our purpose in this paper is to establish a stability result for a compact Riemannian manifold with boundary.
   From now on, $(M^n,g)$ denotes some smooth compact n-dimensional Riemannian manifold with boundary unless we specify otherwise. We use $L_g$ to denote $- \Delta + c_nR_g$, $B_g$
   to denote $\frac{\partial}{\partial \nu} +\frac{n-2}{2}h_g$, $\nu$ to denote the outward unit normal vector on $\partial M$ with respect to $g$, and $h_g$ to denote the mean curvature of $\partial M$. 
   So, the Yamabe problem corresponding to $M$ is to find a metric  $\tilde{g}$, conformal to $g$, such that the scalar curvature of $\tilde{g}$  and the mean curvature for $\partial M$ of $\tilde{g}$  are both constants.
   \\

   Let $u>0$ be some positive function on $M$, let $\tilde{g}= u^\frac{4}{n-2}g$, and calculate the mean curvature $h_{\tilde{g}}$ as 
   \begin{equation}
       h_{\tilde{g}}=\frac{2}{n-2}u^{-\frac{n}{n-2}}B_g u.
   \end{equation}
   There are two types of Yamabe problem with boundary:\\
   Type (i)\cite{Es92a}: find a metric $\tilde{g}$ conformal to $g$ such that the scalar curvature $R_{\tilde{g}}$ is constant in $M$ and the mean curvature $h_{\tilde{g}}$ vanishes on $\partial M$.\\
   Type (ii)\cite{Es92b}: find a metric $\tilde{g}$ conformal to $g$ such that the scalar curvature $R_{\tilde{g}}$ vanishes in $M$ and the mean curvature $h_{\tilde{g}}$ is constant on $\partial M$.

   Combined with \eqref{eqn: comformal change}, we can get the following two Yamabe equations corresponding to Type (i) and Type (ii)
   \begin{equation}\label{eqn:boundary}
       \begin{cases}
           L_g u= C u^\frac{n+2}{n-2}  ~~ \text{in}  ~~M^\circ  ,\\
       B_g u= 0   ~ ~\text{on}  ~\partial M,\\
       ~~~~u\geq 0  ~~\text{in}~ M. 
       \end{cases}
    \end{equation}
    \begin{equation}
       \begin{cases}
           L_g u=0 ~~ \text{in}  ~~M^\circ  ,\\
       B_g u= C u^\frac{n}{n-2}   ~ \text{on}  ~\partial M,\\
       ~~~u \geq  0 ~\text{in} ~ M.
       \end{cases}
    \end{equation}
    In this paper, we will discuss the Type (i) Yamabe problem. Compared to the case of the closed manifold, there is also an energy functional
    \begin{equation}
        Q(u) =
\ \frac{ \int_M  |\na u|^2 + c_nR_g u^2 \, d\vol_g+ \frac{n-2}{2}\int_{\partial M}h_g u^2 d \sigma_g}
  {\left(\int_M u^{2^*}\, d\vol_g\right)^{2/2^*}}.
    \end{equation}
    Moreover, the Yamabe constant is defined as
    \begin{equation}
        Y(M,\partial M, [g])=\inf\{Q(u):u\in W^{1,2}(M), u\ge0\}.
    \end{equation}
    $Y(M,\partial M, [g])$ is invariant under a conformal change of the metric $g$ and $Y(M,\partial M, [g]) \leq Y(S^n_+,\partial S^n_+,[g_0])$, where $Y(S^n_+,\partial S^n_+,[g_0])$ denotes the Yamabe constant
    of the hemisphere $S^n_+$ equipped with the standard metric.

    In \cite{Es92a}, Escobar showed the existence of the Type(ii) Yamabe problem provided $Y(M,\partial M, [g]) < Y(S^n_+,\partial S^n_+,[g_0])$, and he also proved that $Y(M,\partial M, [g]) < Y(S^n_+,\partial S^n_+,[g_0])$ holds in the dimension $3\leq n \leq 5$ if $M$ is not conformally equivalent to the hemisphere $S^n_+$. In dimension $n\geq 6$, Escobar was able to verify this inequality under the assumption that $\partial M$ is not umbilic. Brendle and Chen\cite{BC14}
    considered the remaining case: $n\geq 6$ and $\partial M$ is umbilic. They verified the remaining case subject to the validity of the Positive Mass Theorem.
    
    In this paper, we consider the stability of minimizers of the Yamabe metrics. For the type(ii) Yamabe problem, Borquez, Caju and Hanne\cite{Ha26} proved the quantitative stability for the Yamabe minimizers using the method of Lyapunov-Schmidt reduction and the \L ojasiewicz distance inequality. So we will handle the type(i) problem.
    
    Let $\M \subset W^{1,2}(M)$ denote the set of all minimizers of $Q(u)$. Define 
    \begin{equation}
        d(u,\M) = \frac{\inf\left\{ \| u-v\|_{W^{1,2}(M)} \ | \ v \in \M\right\}}{\| u\|_{W^{1,2}(M)}}.
    \end{equation}
    Our result is a quantitative stability estimate for minimizers of the Yamabe functional.
    
\begin{theorem}\label{thm: minimizers}
 	Let $(M^n,g)$ be a $C^\infty$ compact Riemannian manifold of dimension $n\geq 3$ with boundary such that $Y(M,\partial M,[g]) < Y(S^n_+,\partial S^n_+,[g_0]) $. There exist constants $c>0$ and  $\g\ge 0 $, depending on $(M,g)$, such that 
 	\begin{equation}\label{e: minimizers}
 	Q(u) - Y(M,\partial M,[g]) \geq c\, d(u, \mathcal{M})^{2+\g}\, \qquad \text{for all } u \in W^{1,2}(M;\R_+)  \,.
 	\end{equation}
 \end{theorem}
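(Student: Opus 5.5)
We follow the strategy of Engelstein--Neumayer--Spolaor (Theorem \ref{ENL22}), adapted to the boundary operator $B_g$. The argument splits into a non-local step and a local one.

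\emph{Step 1: compactness.} We first show that if $u_k\ge 0$ satisfy $\|u_k\|_{W^{1,2}}=1$ and $Q(u_k)\to Y(M,\partial M,[g])$, then $d(u_k,\M)\to 0$. Since $Y(M,\partial M,[g])<Y(S^n_+,\partial S^n_+,[g_0])$, a concentration-compactness argument applies: the sharp Sobolev trace inequalities of Escobar, together with the hemisphere constant for bubbles concentrating at boundary points and the full-sphere constant $Y(S^n)>Y(S^n_+)$ for interior points, rule out concentration. Hence $u_k$ converges strongly in $W^{1,2}$, up to scaling, to a minimizer. Because $d(u,\M)\le 1$ trivially (take $v=0$ in the closure, or argue directly with the bounded quantity), the estimate \eqref{e: minimizers} then reduces to the regime $d(u,\M)\le\delta$, i.e.\ $u$ lies near a fixed minimizer $u_0$. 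Here we use that $\M$, suitably normalized, is compact in $C^{2,\alpha}(M)$ by elliptic regularity for the Neumann-type problem \eqref{eqn:boundary}, and we may normalize $\int u^{2^*}=1$.

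\emph{Step 2: local analysis near $u_0$.} Take the normalized functional $\mathcal E(u)=\int_M|\nabla u|^2+c_nR_gu^2+\frac{n-2}{2}\int_{\partial M}h_gu^2-Y\,(\int u^{2^*})^{2/2^*}$ (or work on the constraint manifold). Its second variation at $u_0$ is the self-adjoint operator
\[
L_g-(2^*-1)Y u_0^{2^*-2}
\]
with boundary condition $B_g\varphi=0$. This operator is Fredholm with finite-dimensional kernel $K$ on $W^{1,2}$, since the weight $u_0^{2^*-2}$ is smooth and positive.

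We then carry out a Lyapunov--Schmidt reduction. Writing $u=u_0+\varphi+\psi$ with $\varphi\in K$ and $\psi\perp K$, the implicit function theorem, applied to the projection of the Euler--Lagrange equation onto $K^\perp$, produces a map $\psi=\Psi(\varphi)$. The reduced function $f(\varphi)=\mathcal E(u_0+\varphi+\Psi(\varphi))$ is real-analytic near $0$ in the finite-dimensional space $K$. The analyticity comes from the analyticity of $t\mapsto t^{2^*}$ near $u_0>0$; this is where the positivity and $C^0$-closeness of $u_0$ are needed, and we use a $C^{2,\alpha}$ setting with boundary Schauder estimates for the oblique (Robin) problem. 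Critical points of $f$ correspond exactly to critical points of $Q$ near $u_0$, and these are minimizers since $f\ge f(0)$ on the minimizing set.

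The {\L}ojasiewicz distance inequality for analytic functions then gives
\[
f(\varphi)-f(0)\ge c\,\mathrm{dist}(\varphi,\{f=f(0)\})^{2+\gamma}.
\]

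\emph{Step 3: combining.} For a general $u$ near $u_0$, decompose $u=u_0+\varphi+\Psi(\varphi)+w$, with $w\perp K$ measuring the distance to the reduction graph. Coercivity of the second variation on $K^\perp$ gives
\[
\mathcal E(u)\ge f(\varphi)+c\|w\|^2_{W^{1,2}}-o(\|w\|^2).
\]
Combined with the {\L}ojasiewicz inequality this yields $\mathcal E(u)\ge c\,d(u,\M)^{2+\gamma}$. Normalization then converts this into \eqref{e: minimizers}, with $u$ ranging over nonnegative functions.

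The main obstacle is that the perturbation $u$ is only $W^{1,2}$, whereas the reduction is naturally built in $C^{2,\alpha}$. As in \cite{ENL22}, we expand $\mathcal E$ around the point $u_0+\varphi+\Psi(\varphi)$. The cubic and higher remainder terms $\int|w|^{2^*}$ and the like are controlled by the Sobolev inequality and are of order $o(\|w\|^2)$, because $2^*>2$ and the base point is uniformly bounded in $L^\infty$. The boundary terms involving $h_g$ are quadratic and so are harmless. A finite cover of the compact set $\M$ by such neighborhoods then gives uniform constants $c$ and $\gamma$.
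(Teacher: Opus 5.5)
Your proposal is correct and follows essentially the same route as the paper: compactness of minimizing sequences from $Y(M,\partial M,[g])<Y(S^n_+,\partial S^n_+,[g_0])$, a Lyapunov--Schmidt reduction near each normalized minimizer built in $C^{2,\alpha}$ with Schauder estimates for the Robin problem, the \L ojasiewicz inequality for the analytic reduced function combined with coercivity of the second variation on $K^\perp$, and a finite cover of the compact set $\M_1$. One small slip: $L_g-(2^*-1)Yu_0^{2^*-2}$ with $B_g\varphi=0$ is the Hessian only on $T_{u_0}\B$, whereas for your unconstrained $\mathcal E$ the quadratic form carries an extra term $(2^*-2)Y\bigl(\int_M u_0^{2^*-1}\varphi\bigr)^2$; without that term $u_0$ would be a negative direction when $Y>0$ and coercivity on $K^\perp$ would fail, so either work on $\B$ as the paper does or use the corrected Hessian, whose kernel contains $u_0$.
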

 \begin{remark}
     Similarly to the closed case, we expect the quadratic stability in Theorem \ref{ENL22} to hold generically, i.e. there exists an open dense subset $\mathcal G$ in the $C^2$ topology on the space of $C^\infty$-conformal classes of metrics on $M$ such that if $[g] \in \mathcal G$, we may take $\g=0$. This property comes from Schoen \cite{Sc91} that generically, there are finitely many critical points of $Q$ and each one is non-degenerate. However, these tools are still lacking in the boundary setting.
 \end{remark}
 \begin{remark}
     In the closed case, there are some examples (See in \cite{Sc91}, \cite{CCR15} and \cite{Fr22}) with super quadratic growth, i.e. $\gamma>0$. In \cite{Fr22}, Frank showed that on the manifold $S^1(1/\sqrt{d-2})\times S^{d-1}(1)$ with the standard product metric, the stability exponent is $4$. One can simply take $S^1(1/\sqrt{d-2})\times S^{d-1}_+(1)$ and apply the above result.
     So we can get the example with $\gamma >0$ in the minimal boundary case.
     
 \end{remark}
We are also able to obtain  a corollary from Theorem \ref{thm: minimizers}, a conformally invariant stability estimate. We can define the following distance
\begin{equation}
    ||g_u - g_v|| =\left(\int_M |u-v|^{2^*} d \vol_g   \right) ^\frac{1}{2^*},
\end{equation}
where $g_u = u^\frac{4}{n-2}g$, and $2^*=\frac{2n}{n-2}$. Similarly, in the case of $Y(M,\partial M,[g])\ge 0$, we define
\begin{equation}
    ||g_u - g_v||_*= \left( \int_M \frac{1}{c_n} |\na u- \na v|^2 + R_g (u-v)^2 d \vol_g +2(n-1)\int _{\partial M}h_g (u-v)^2  \right)^\frac{1}{2},
\end{equation}
for any $g\in \M(M,g)$ with $\vol_g(M)=1$. We may claim that although $||-||$ and $||-||_*$ are both defined with respect to a fixed conformal representative $g\in[g]$, but it is independent of this choice.(see, e.g. in \cite{ENL22})

 \begin{corollary}
 Let $(M^n,g)$ be a compact $C^\infty$ Riemannian manifold of dimension $n\geq 3$ with boundary such that $Y(M,\partial M,[g]) < Y(S^n_+,\partial S^n_+,[g_0]) $. There exist constants $c>0$ and  $\g\ge 0 $, depending on $M$ and $[g]$, such that 
 	\begin{equation}\label{e: minimizers conf invar}
 	\mathcal{E}_g  - Y(M,\partial M,[g]) \geq c\, \left(\frac{\inf\{ \| g - \tilde g\| : \tilde{g} \in \M \}}{\vol_g(M)^{1/2^*}}\right)^{2+\g}\, \qquad \forall g \in [g]\,.
 	\end{equation}
 	Here $\mathcal{E}_g= c_n \vol_g(M)^{-2/2^*}\,\left(\int_M R_{g} d\vol_g +2(n-1) \int_{\partial M} h_g d\sigma_g\right)  $ .\\
 When $Y= Y(M,\partial M,[g]) \geq 0$ and $\mathcal{E}_g  - Y(M,\partial M,[g])  \leq 1$, there exist constants $c>0$ and  $\g\ge 0 $ depending on $M$ and $[g]$ such that 
	\begin{equation}\label{e: minimizers conf invar2}
 	\mathcal{E}_g  - Y(M,\partial M,[g]) \geq c\, \left(\frac{\inf\{ \| g - \tilde g\|_* : \tilde{g} \in \M \}}{\vol_g(M)^{ 1/2^*}}\right)^{2+\g}\, \qquad \forall g \in [g]\,.
 	\end{equation}
 \end{corollary}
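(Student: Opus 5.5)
The corollary should follow from Theorem \ref{thm: minimizers} by rewriting both sides in terms of a fixed background metric; I would not reprove any analysis. By conformal invariance of $Y(M,\partial M,[g])$ and of the two distances, I first fix a representative $g_0\in[g]$ that is itself a minimizing Yamabe metric, normalized so that $\vol_{g_0}(M)=1$. Such a metric exists by Escobar's theorem, since $Y<Y(S^n_+,\partial S^n_+,[g_0])$. Then $R_{g_0}\equiv Y/c_n$ and $h_{g_0}\equiv 0$, and every $g\in[g]$ can be written as $g=u^{4/(n-2)}g_0$ with $u>0$ smooth.

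The transformation laws for $R$ and $h$, together with $d\vol_g=u^{2^*}d\vol_{g_0}$ and $d\sigma_g=u^{2(n-1)/(n-2)}d\sigma_{g_0}$, give $c_n\int_M R_g\,d\vol_g=\int_M uL_{g_0}u$ and $2(n-1)c_n\int_{\partial M}h_g\,d\sigma_g=\frac{n-2}{2}\int_{\partial M}u\,\partial_\nu u$. Integrating by parts then yields $\mathcal E_g=Q(u)$. Similarly, $\tilde g=v^{4/(n-2)}g_0\in\M$ exactly when $v\in\M$ (up to the scaling of $v$).

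Applying Theorem \ref{thm: minimizers} gives
\[
\mathcal E_g-Y\ge c\,\inf_{v\in\M}\Bigl(\|u-v\|_{W^{1,2}}/\|u\|_{W^{1,2}}\Bigr)^{2+\g}.
\]
To reach \eqref{e: minimizers conf invar}, I would use the Sobolev inequality $\|u-v\|_{L^{2^*}}\le C\|u-v\|_{W^{1,2}}$. The denominator also has to be handled: $\vol_g(M)^{1/2^*}=\|u\|_{L^{2^*}}$, whereas the theorem divides by $\|u\|_{W^{1,2}}$. Since $\M$ is a cone, I can rescale and compare against $\lambda v$. A cleaner way is to first normalize $\|u\|_{L^{2^*}}=1$, which is allowed because $Q$ is $0$-homogeneous and the right side of \eqref{e: minimizers conf invar} is scale-invariant.

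After this normalization one of two things happens. If $Q(u)-Y\ge 1$, the bound is trivial, because the $L^{2^*}$-distance to $\M$ is at most $\|u\|_{L^{2^*}}=1$ (take $\tilde g$ with $v$ tending to $0$ along the cone, or compare with a fixed element). If instead $Q(u)\le Y+1$, then $\|u\|_{W^{1,2}}^2\le C(Q(u)+C')$ is bounded, using coercivity of the quadratic form modulo $L^2$, which is controlled by $L^{2^*}$. Hence $\|u\|_{W^{1,2}}\le C$, and the quotient in the theorem dominates $\|u-v\|_{L^{2^*}}/C$.

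For \eqref{e: minimizers conf invar2}, with $Y\ge0$ and $g_0$ chosen as above, the $*$-norm becomes $\|w\|_*^2=\frac1{c_n}\int_M(|\na w|^2+Y w^2)$. This is equivalent to $\|w\|_{W^{1,2}}^2$ when $Y>0$. When $Y=0$ it controls only the gradient, and I would recover the constant part through $\|w\|_{L^{2^*}}$ using a Poincaré-type argument. The same normalization, the assumption $\mathcal E_g-Y\le1$ (which supplies the $W^{1,2}$ bound on $u$), and Theorem \ref{thm: minimizers} then finish the proof.

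I expect the main obstacle to be the bookkeeping of normalizations: reconciling $\|u\|_{W^{1,2}}$ with $\vol_g(M)^{1/2^*}$, and checking that $\|\cdot\|_*$ is independent of the representative. That independence is why the minimizer $g_0$ is chosen as the background metric.
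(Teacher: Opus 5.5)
Your proposal is correct and follows the paper's route: the corollary is deduced from Theorem \ref{thm: minimizers} using conformal invariance and the Sobolev inequality, and your case split on $Q(u)-Y\ge 1$ together with the $W^{1,2}$ bound under $\|u\|_{L^{2^*}}=1$ supplies the details that the paper leaves to \cite{ENL22}. Two small corrections: first, the boundary identity should read $2(n-1)c_n\int_{\partial M}h_g\,d\sigma_g=\int_{\partial M}u\,B_{g_0}u\,d\sigma_{g_0}=\int_{\partial M}u\,\partial_\nu u\,d\sigma_{g_0}$, with no factor $\frac{n-2}{2}$, because $h_g=\frac{2}{n-2}u^{-n/(n-2)}B_{g_0}u$; second, for the $\|\cdot\|_*$ estimate you only need the trivial bound $\|w\|_*\le C\|w\|_{W^{1,2}}$, so no Poincaré-type argument is required when $Y=0$.
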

 This corollary can be directly derived from Theorem \ref{thm: minimizers} and the Sobolev inequality. See \cite{ENL22} for more details on the conformal invariance of the involved quantities.

\section{Properties of the Yamabe Energy and Lyapunov-Schmidt reduction}
    Recall the Yamabe energy:
    \begin{equation}
        Q(u) =
\ \frac{ \int_M  |\na u|^2 + c_nR_g u^2 \, d\vol_g+ \frac{n-2}{2}\int_{\partial M}h_g u^2 d \sigma_g}
  {\left(\int_M u^{2^*}\, d\vol_g\right)^{2/2^*}}.
    \end{equation}
Since $Q(cu ) = Q(u)$ for any $c >0$,  it will often be easier to work with functions that have $L^{2^*}$ norm equal to  $1$. To that end we introduce the following Banach manifold:

\begin{equation}\label{e:defofB}
 \B = \left\{u\in W^{1,2}(M ; \R_+) \mid \int_M u^{2^*}\ d\vol_g = 1\right\}.\end{equation}

 Note that the collection of metrics represented by \eqref{e:defofB} is conformally invariant, this can be seen in the equivalent condition that the metric $g_u = u^{4/(n-2)}g$ has unit volume. 

 It will be useful to give some additional definitions. Denote $\mathcal{M} \subset W^{1,2}(M)$ the set of all miminzers of $Q(u)$, and let  $\mathcal{CSC} \subset W^{1,2}(M)$ be the set of all critical points of $Q(u)$,
 i.e. solutions to \eqref{eqn:boundary} for some $C \in \R$. 
 Furthermore, let $\M_1 := \M \cap \B$ and $\mathcal{CSC}_1:=\mathcal{CSC}\cap \B$, which are normalized minimizers and critical points. Finally, for any $v \in \B$, we denote by $\B(v,\delta)$ the $W^{1,2}$ ball of radius $\delta$ centered at $v$ inside of $\B$.

 Since we will do the variation of $Q$ in the space $\B$, we define the tangent space for any $v\in \B$
 \begin{equation}
     T_v \B = \left\{u \in W^{1,2}(M) \mid  \int_{M} v^{2^*-1} u \, d \vol_g =0\right\},
 \end{equation}
and the associated orthogonal projection:
\begin{align}
    \pi_{T_v\B} : ~& W^{1,2}(M) \to T_v\B\subset W^{1,2}(M)\\
    &~~~~~u~~~~~\mapsto  u - \left(\int v^{2^*-1} u\right)v .
\end{align}
We start by computing the first and second variations of $Q$ on $\B$, Throughout the paper, we will write $\na_\B Q(u) = \na Q(u) \circ \pi_{T_u \B}$ and the same for $\na ^2 _\B Q(u)$.
    
\begin{lemma}\label{lem:bm}
    For every $u \in \B$, the first and second variations of $Q$ on $\B$ are given by
    \begin{equation}\label{e:firstvariation}
    \nabla_{\B} Q(u)[\varphi] = 2\int_{M} \left(-\Delta u + c_nR_gu\right)\pi_{T_u\B}\varphi\, d\vol_g\, +2\int_{\partial M} \left( \frac{\partial u}{\partial \nu}+\frac{n-2}{2}h_g u   \right)\pi_{T_u\B}\varphi d\sigma_g ,
    \end{equation}
	\begin{equation}\label{e:secondvariation1}
	\begin{aligned}
	&\frac{1}{2}\nabla_\B^2 Q(u)[\varphi,\eta]
	 = \int_M \left\{  \,\na \pi_{T_u\B}\varphi\,\cdot \na \pi_{T_u\B}\eta + c_n R_g\,(\pi_{T_u\B}\varphi) \,(\pi_{T_u\B}\eta) \right\}\, d\vol_g\\ &+\frac{n-2}{2}\int_{\partial M}h_g(\pi_{T_u\B}\phi) (\pi_{T_u\B}\psi) d\sigma_g-(2^*-1)Q(u)\int_{M} u^{2^*-2}\, (\pi_{T_u\B}\varphi) \,(\pi_{T_u\B}\eta)\, d\vol_g,\end{aligned}
	\end{equation}
 for all $ \varphi,\eta\in W^{1,2}(M)$.	We will often omit the projection maps when we are doing computations with $\nabla_{\B}^2Q$.

 Moreover, the following two continuous properties of the Hessian operator hold.

 Let $\mathcal{L}_v \phi = L_g \phi -(2^*-1)Q(v)v^{2^*-2} \phi $, for $u,w \in C^{2,\alpha} \cap \B$ sufficiently close, there exists a modulus of continuity $\omega$, with $\omega(r) \to 0$ as $r\to 0$, such that  for all $\eta \in C^{2,\alpha}(M)$,
 \begin{equation}\label{prop1}
     ||(\mathcal{L}_u \pi_{T_u{\B}} \eta, B_u \pi_{T_u{\B}} \eta )-(\mathcal{L}_w \pi_{T_w{\B}} \eta, B_w \pi_{T_w{\B}} \eta )||_{C^{0,\alpha}(M) \times C^{1,\alpha}(\partial M)} \leq \omega(||u-w||_{C^{2,\alpha}(M)})||\eta||_{C^{2,\alpha}(M)}.
 \end{equation}

 For $u,w \in \B$ sufficiently close in $W^{1,2}$, there exists a modulus of continuity $\omega$, with $\omega(r) \to 0$ as $r\to 0$, such that  for all $\vphi, \eta \in W^{1,2}(M)$,
 \begin{equation}\label{prop2}
     |\na ^2 Q_\B(u)[\vphi,\eta]-\na ^2 Q_\B(w)[\vphi,\eta]|\leq w(||u-w||_{W^{1,2}(M)})||\vphi||_{W^{1,2}(M)}||\eta||_{W^{1,2}(M)}.
 \end{equation}

\end{lemma}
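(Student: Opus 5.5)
The plan is to differentiate $Q$ directly, using the normalization $\int_M u^{2^*}=1$, and then read off the restricted derivatives by composing with $\pi_{T_u\B}$. Write $E(u)=\int_M |\na u|^2+c_nR_gu^2\,d\vol_g+\frac{n-2}{2}\int_{\partial M}h_gu^2\,d\sigma_g$ and $N(u)=\int_M u^{2^*}$. Then $Q=E\,N^{-2/2^*}$. For $u\in\B$ and any direction $\vphi$,
\[
\na Q(u)[\vphi]=\na E(u)[\vphi]-2Q(u)\int_M u^{2^*-1}\vphi ,
\]
and the last term vanishes when $\vphi\in T_u\B$.

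\textbf{First variation.} Integrating by parts in $\na E(u)[\vphi]=2\int_M \na u\cdot\na\vphi+c_nR_gu\vphi+(n-2)\int_{\partial M}h_gu\vphi$ gives \eqref{e:firstvariation} with $\vphi$ replaced by $\pi_{T_u\B}\vphi$. For $u\in W^{1,2}$ this identity is understood weakly.

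\textbf{Second variation.} I take a curve in $\B$, for instance $\gamma(t)=(u+t\vphi+s\eta)/N(u+t\vphi+s\eta)^{1/2^*}$ with $\vphi,\eta\in T_u\B$. Equivalently, I compute the full Hessian of $Q$ on $W^{1,2}$ at $u$ and restrict it to $T_u\B\times T_u\B$.

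The Hessian of $E N^{-2/2^*}$ at $u$ with $N(u)=1$ has several pieces. The first is $\na^2E[\vphi,\eta]$. The cross terms are $-\frac{2}{2^*}\big(\na E[\vphi]\na N[\eta]+\na E[\eta]\na N[\vphi]\big)$. Then there is $E\cdot\na^2(N^{-2/2^*})[\vphi,\eta]$.

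Since $\na N(u)[\vphi]=2^*\int u^{2^*-1}\vphi=0$ on $T_u\B$, the cross terms vanish. The same is true for the part of $\na^2(N^{-2/2^*})$ coming from $(\na N)^2$. What remains is
\[
-\frac{2}{2^*}E\,\na^2N[\vphi,\eta]=-2(2^*-1)Q(u)\int_M u^{2^*-2}\vphi\eta .
\]
Halving gives \eqref{e:secondvariation1}. Because $\B$ is not a linear space, I will check that the intrinsic Hessian agrees with this restriction. The normal component of $\gamma''(0)$ is paired with $\na Q(u)$, and the argument above shows it contributes nothing extra: the normalization term contributes $-2Q\int u^{2^*-1}\gamma''$, which is exactly what is needed. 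This agrees with the convention $\na^2_\B Q=\na^2Q\circ\pi$ used in the statement.

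\textbf{Continuity \eqref{prop1}.} I write
\[
\mathcal L_u\pi_u\eta-\mathcal L_w\pi_w\eta=L_g(\pi_u\eta-\pi_w\eta)-(2^*-1)\big[Q(u)u^{2^*-2}\pi_u\eta-Q(w)w^{2^*-2}\pi_w\eta\big],
\]
and treat the boundary term $B_g(\pi_u\eta-\pi_w\eta)$ in the same way. Here $\pi_u\eta-\pi_w\eta=(\int w^{2^*-1}\eta)w-(\int u^{2^*-1}\eta)u$, whose $C^{2,\alpha}$ norm is bounded by $C\|u-w\|_{C^{2,\alpha}}\|\eta\|_{C^0}$. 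Also $Q$ is continuous in $C^{2,\alpha}$.

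The delicate point, and the main obstacle, is the map $u\mapsto u^{2^*-2}$ in $C^{0,\alpha}$ when $2^*-2<1$ (i.e. $n>6$), since this map is only Hölder. I will use that $u,w$ are close to a positive function: elements near $\mathcal{CSC}_1$ are positive by the maximum principle and Hopf's lemma. On the region $u\ge c>0$ the map $s\mapsto s^{2^*-2}$ is smooth, which yields a modulus $\omega$. Failing positivity, the composition map is still continuous in $C^{0,\alpha}$, which gives a (non-Lipschitz) modulus.

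\textbf{Continuity \eqref{prop2}.} The gradient and boundary terms of the difference are handled by the trace inequality, together with the same bound on $\pi_u-\pi_w$ as operators on $W^{1,2}$, via the Sobolev embedding $W^{1,2}\hookrightarrow L^{2^*}$. For the potential term I use Hölder's inequality:
\[
\Big|\int (u^{2^*-2}-w^{2^*-2})\vphi\eta\Big|\le \|u^{2^*-2}-w^{2^*-2}\|_{L^{n/2}}\|\vphi\|_{L^{2^*}}\|\eta\|_{L^{2^*}}.
\]
The map $u\mapsto u^{2^*-2}$ from $L^{2^*}$ to $L^{n/2}$ is continuous. It is Hölder with exponent $\min(1,2^*-2)$, by the elementary inequality $|a^p-b^p|\le |a-b|^p$ for $p\le1$ and the mean value theorem otherwise. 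Combined with the continuity of $Q$ on $W^{1,2}$, this gives $\omega$.
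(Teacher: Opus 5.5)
Your proposal follows the same route as the paper. You differentiate $Q=E\,N^{-2/2^*}$ at a point of $\B$, drop every term containing $\int_M u^{2^*-1}\vphi$ on $T_u\B$, and prove \eqref{prop1} and \eqref{prop2} by splitting each difference into a projection part and a potential part. The projection part is controlled by writing $\pi_{T_u\B}\eta-\pi_{T_w\B}\eta$ in terms of $u-w$ and $w$, together with the Sobolev embedding. Two of your steps supply details the paper's proof omits: the explicit $L^{n/2}\times L^{2^*}\times L^{2^*}$ H\"older bound for $u^{2^*-2}$, and the use of positivity of critical points (strong maximum principle plus Hopf, with $B_g v=0$) for the $C^{0,\alpha}$ estimate.

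The one thing to delete is the fallback sentence ``failing positivity, the composition map is still continuous in $C^{0,\alpha}$''; it is false. Suppose $u\ge 0$ vanishes to second order at an interior point, $u\sim|x-x_0|^2$. Then $u^{2^*-2}\sim|x-x_0|^{8/(n-2)}$, which is not in $C^{0,\alpha}$ once $\alpha>8/(n-2)$. In that case $\mathcal L_u\pi_{T_u\B}\eta\notin C^{0,\alpha}$ for suitable $\eta$, so the left side of \eqref{prop1} is infinite. Positivity is therefore genuinely needed, and \eqref{prop1} must be read locally around a positive $v\in\M_1$. That is exactly how it is used in Lemma~\ref{lem: LS reduction}.
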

\begin{proof} 
	  Let us denote
	$$
	\mathcal E(u):=\int_M  |\nabla u|^2+ c_n R_gu^2\,d\vol_g\,+\frac{n-2}{2}\int_{\partial M} h_g u^2 d\sigma_g,
	$$ 
	and observe that if $u\in \B$, then $\mathcal E(u)=Q(u)$. For $u\in \B$ and $\varphi \in W^{1,2}(M)$,  we can compute the first variation of $Q$ at points of $\B$:
	\begin{align}
	\nabla Q(u)[\phi] &:= \frac{d}{dt}(Q (u + t\phi))\Big|_{t=0}\\ 
	&=\bigg( [\vol(M, g_{u+t\phi})]^{-2/2^*}\, 2\big[\int_M\left(( \nabla u\cdot\nabla \phi+\frac{t}{2}\,|\nabla \phi|^2)+c_n R_g(u\,\phi+\frac{t}{2}\phi^2)\right)\,d\vol_g \notag\\
    &+ \frac{n-2}{2}\int_{\partial M}h_g(u\phi+\frac{t}{2}\phi^2)d\sigma_g \big] \notag \\
	 &  -2\,\left[\vol(M, g_{u+t\phi}) \right]^{-(2+2^*)/2^*}\, \left[\int_M (u+t\phi)^{2^*-1} \,\phi \,d\vol_g\right]  \,\mathcal E(u+t\phi)\bigg)\Big|_{t=0} \notag 
	\\
	&= 2 \int_{M} \left(-\Delta u + c_nR_gu-Q(u)u^{2^*-1}\right)\phi\, d\vol_g\, + 2\int_{\partial M} \left( \frac{\partial u}{\partial \nu}+\frac{n-2}{2}h_g u   \right)\phi d\sigma_g.
    \end{align} 
	In particular, when restricted to the tangent space of $\B$, we have
	$$
	\nabla_{\B} Q(u)[\varphi] = 2\int_{M} \left(-\Delta u + c_nR_gu\right)\pi_{T_u\B}\varphi\, d\vol_g\, +2\int_{\partial M} \left( \frac{\partial u}{\partial \nu}+\frac{n-2}{2}h_g u   \right)\pi_{T_u\B}\varphi d\sigma_g .
	$$

	Differentiating \eqref{e:firstvariation}, we obtain
   \begin{align}
       &\nabla^2Q(u)[\phi,\psi]\\
       =& [\vol(M, g_{u})]^{-2/2^*}2\left[\int_M \left(\na \phi \cdot \na \psi +c_n R_g\phi \psi  \right)d\vol_g +\frac{n-2}{2}\int_{\partial M}h_g\phi \psi d\sigma_g    \right] \notag \\
       &-2[\vol(M, g_{u})]^{-2/2^*-1}\left[\int_M u^{2^*-1}\psi d\vol_g  \right]2\cdot\left(\int_M(\na u \cdot \na \phi +c_nR_g u \phi +\frac{n-2}{2}\int_{\partial M}h_g u \phi d\sigma_g \right) \notag \\
       &+2\cdot(2+2^*)[\vol(M, g_{u})]^{-2/2^*-2}\int_M u^{2^*-1}\psi d\vol_g\int_M u^{2^*-1}\phi d\vol_g \mathcal E(u)\notag \\
       &-2[\vol(M, g_{u})]^{-2/2^*-1}(2^*-1)\int_M u^{2^*-2} \phi \psi d\vol_g \mathcal E(u) \notag \\
       &-2[\vol(M, g_{u})]^{-2/2^*-1}\int_M u^{2^*-1} \phi d\vol_g \frac{d}{dt}\large|_{t=0} \mathcal E(u+t\psi) \notag \\
       =&2\left[\int_M \left(\na \phi \cdot \na \psi +c_n R_g\phi \psi  \right)d\vol_g +\frac{n-2}{2}\int_{\partial M}h_g\phi \psi d\sigma_g    \right] -2(2^*-1)Q(u)\int_Mu^{2^*-2}\phi \psi d\vol_g \notag \\
       &-2\int_M u^{2^*-1} \phi d\vol_g \frac{d}{dt}\large|_{t=0} \mathcal E(u+t\psi)-2\int_M u^{2^*-1} \psi d\vol_g \frac{d}{dt}\large|_{t=0} \mathcal E(u+t\phi)\notag \\
       &+2\cdot (2+2^*) Q(u) \int_M u^{2^*-1}\psi d\vol_g\int_M u^{2^*-1}\phi d\vol_g. \notag 
   \end{align}
   In particular, when restricted to the tangent space of $\B$, we have
   \begin{align}
       &\nabla_\B^2Q(u)[\phi,\psi]=2\bigg[\int_M \left(\na \pi_{T_u\B}\phi \cdot \na \pi_{T_u\B}\psi +c_n R_g\pi_{T_u\B}\phi \pi_{T_u\B}\psi  \right)d\vol_g\\
       &+\frac{n-2}{2}\int_{\partial M}h_g\pi_{T_u\B}\phi \pi_{T_u\B}\psi d\sigma_g\bigg]
       -2(2^*-1)Q(u)\int_Mu^{2^*-2}\pi_{T_u\B}\phi \pi_{T_u\B}\psi d\vol_g. \notag
   \end{align}
   To conclude the proof, let us check the continuity. From the definition of $\pi_{T_u{\B}}$, let $a_u(\eta)= \int_M u^{2^* -1} \eta d \vol_g$, we have 
   \begin{align*}
       \pi_{T_u{\B}} \eta -\pi_{T_w{\B}} \eta &= -a_u(\eta)u +a_w(\eta) w\\
       &=-a_u(\eta) (u-w) -(a_u(\eta) -a_w(\eta))w.
   \end{align*}
   It is easy to show $||\pi_{T_u{\B}} \eta -\pi_{T_w{\B}} \eta||_{C^{2,\alpha}(M)} \leq \omega(||u-w||_{C^{2,\alpha}(M)})||\eta||_{C^{2,\alpha}(M)}.$
   Now compute 
   \begin{align} \label{Prop1}
       &\mathcal{L}_u \pi_{T_u{\B}} \eta -\mathcal{L}_w \pi_{T_w{\B}} \eta 
       =L_g (\pi_{T_u{\B}} \eta -\pi_{T_w{\B}} \eta) -(2^*-1)[Q(u)u^{2^*-2}\pi_{T_u{\B}} \eta -Q(w)w^{2^*-2}\pi_{T_w{\B}} \eta] \notag\\
       =&L_g (\pi_{T_u{\B}} \eta -\pi_{T_w{\B}} \eta) -(2^*-1)(Q(u)u^{2^*-2}-Q(w)w^{2^*-2})\pi_{T_u{\B}} \eta - (2^*-1)Q(w)w^{2^*-2}(\pi_{T_u{\B}} \eta -\pi_{T_w{\B}} \eta).
   \end{align}
   Combined with the continuity of $L_g: C^{2,\alpha}(M) \to C^{0,\alpha}(M)$ and $B_g : C^{2,\alpha}(M) \to C^{1,\alpha}(\partial M) $, we can verify \eqref{prop1}.

   For the $W^{1,2} $ continuity, by the Sobolev embedding $W^{1,2} (M) \hookrightarrow L^{2^*}(M)$,
   we get 
   \begin{equation*}
       |a_u(\eta)| \leq ||u||_{L^{2^*}}^{2^*-1} ||\eta ||_{L^{2^*}} \leq C ||\eta ||_{W^{1,2}(M)}.
   \end{equation*}
   Moreover, the map $u \mapsto u^{2^*-1}$ is continuous from $L^{2^*}$ to $L^{\frac{2^*}{2^*-1}
   }$ on nonnegative functions, hence
   \begin{equation*}
       |a_u(\eta)-a_w(\eta)|\leq ||u^{2^*-1}-w^{2^*-1}||_{L^{\frac{2^*}{2^*-1}}}||\eta||_{L^{2^*}}
       \leq \omega(||u-w||_{W^{1,2}(M)}) ||\eta ||_{W^{1,2}(M)}.
   \end{equation*}
   It follows that $||\pi_{T_u{\B}} \eta -\pi_{T_w{\B}} \eta||_{W^{1,2}(M)} \leq \omega(||u-w||_{W^{1,2}(M)})||\eta||_{W^{1,2}(M)}.$
   Then, the same argument as \eqref{Prop1}, we can verify \eqref{prop2}.
   
\end{proof}

\subsection{Lyapunov-Schmidt Reduction} 

Given $v \in \mathcal{M}_1$, we let $K= \ker \nabla^2_{\B} Q(v)[-,-]\subset T_v\B$, thinking of $\nabla^2_{\B} Q(v)[-,-]$ as  an operator from $T_v\B\subset W^{1,2}(M) \rightarrow H^{-1}(M)$. Consider the effect of the boundary and the formula \eqref{e:secondvariation1}, the kernel is
\begin{equation}
    K=\{ \phi \in T_v\B|-\Delta \phi +c_n R_g \phi -(2^*-1)Q(v)v^{2^*-2}\phi =0 ~\text{in}~ M^\circ, B_g \phi =0 ~ \text{on}~ \partial M\}
\end{equation}
Since it is generated by an elliptic operator with Robin boundary condition on a compact manifold(See \cite{ADN64} or \cite{GT}), we know $\dim K := l < \infty$. We let $K^\perp$ denote the orthogonal complement of $K$ in $T_v\B$ with respect to the $L^2$ inner product. We parametrize a small neighborhood of $v$ in $\B$ by the tangent space $T_v \B$ with
\begin{align*}
    \Psi: &T_v\B \to B  \\
    &~~~\xi ~\mapsto \frac{v+\xi}{||v+\xi||_{L^{2^*}(M)}}.
\end{align*}
It is easy to see that $\Psi(\xi) \in \B $, $\Psi(0)= v$ and $\na \Psi (0) = Id$. Define $\hat{Q}(\xi)=Q(\Psi(\xi))$.
\begin{lemma}[Lyapunov-Schmidt Reduction]\label{lem: LS reduction}
	Let $(M,g)$ be a compact Riemannian manifold with boundary with $g\in C^3$ and fix $v \in \mathcal{\M}_1$. There is a open neighborhood $U\subset K$ of $0$ in $K$
	and a map 
	\[
	F: U \to K^{\perp} 
	\]
	 with $F(0)=0$ and $\nabla F(0)=0$ satisfying the following properties:
\begin{enumerate}
	\item\label{item: LS property 1} Let $q: U\to \R$ be the function defined by $q(\vphi)= \hat{Q}( \vphi + F(\vphi))$  such that
    \begin{align}
        &\na q(\vphi)[\eta]= \na \hat{Q}(\vphi +F(\vphi))[\eta]  ~~\forall \eta \in K,\\
        &\na \hat{Q}(\vphi +F(\vphi))[\zeta]=0~~ \forall \zeta \in K^\perp.
    \end{align}
	Furthermore, $\vphi \mapsto q(\vphi)$ is real analytic.

\item \label{item: prop2}$\na q(\vphi)=0$ if and only if $\Psi(\vphi+F(\vphi))$  is a normalized critical point of $Q$.

\item\label{item schauder estimates} There exists $C$ such that for all $ \vphi \in  U$ and $\eta \in K$, we have
		\begin{equation}\label{eqn: estimates for DF}
	\begin{split}
\| \nabla F(\vphi)[\eta] \|_{C^{2,\alpha}}  &\leq C \|\eta\|_{C^{0,\alpha}}\,.
	\end{split}
	\end{equation}

\end{enumerate}
\end{lemma}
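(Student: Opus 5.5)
The plan is to apply the implicit function theorem in Hölder spaces to the projection of the Euler--Lagrange operator onto $K^\perp$, following \cite{ENL22}. First, $v$ is smooth and positive. It is a minimizer solving \eqref{eqn:boundary}, so elliptic regularity for the Robin problem together with the maximum principle and Hopf lemma give $v\in C^{2,\alpha}(M)$ and $v>0$ on $M$. Hence $\Psi$ is real analytic from a small $C^{2,\alpha}$ neighborhood of $0$ in $T_v\B$ into $C^{2,\alpha}\cap\B$, since $t\mapsto t^{2^*-1}$ is analytic near positive values. Let $P$ denote the $L^2$-orthogonal projection onto $K^\perp$ (modulo the constraint direction). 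Define
\[
\mathcal N(\vphi,\zeta)=\Big(P\big[L_g w-Q(w)w^{2^*-1}\big],\; B_g w\Big),\qquad w=\Psi(\vphi+\zeta),
\]
for $\vphi\in K$ and $\zeta\in K^\perp\cap C^{2,\alpha}$. The target is the closed subspace of $C^{0,\alpha}(M)\times C^{1,\alpha}(\partial M)$ that is compatible with the Green-type identity pairing the interior and boundary data against $K$. By Lemma~\ref{lem:bm}, $\nabla_\B\hat Q(\xi)$ is represented through integration by parts by this pair. So $\mathcal N=0$ is equivalent to $\na\hat Q(\vphi+\zeta)[\eta]=0$ for all $\eta\in K^\perp$, i.e.\ the second identity in \eqref{item: LS property 1}.

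Next I compute the partial derivative in $\zeta$ at $(0,0)$. Using $\na\Psi(0)=\mathrm{Id}$ and the first variation formula, it is $\zeta\mapsto(P\mathcal L_v\zeta,B_g\zeta)$, with $\mathcal L_v$ as in Lemma~\ref{lem:bm}. The key step is that this map is an isomorphism from $K^\perp\cap C^{2,\alpha}$ onto the target. Injectivity holds because a solution $\zeta$ of $\mathcal L_v\zeta\in K\oplus\mathrm{span}\,v^{2^*-1}$ with $B_g\zeta=0$ must, after pairing with $K$ via Green's formula, lie in $K\cap K^\perp=0$. Surjectivity follows from Fredholm theory for the elliptic Robin problem: $(\mathcal L_v,B_g)$ is self-adjoint and Fredholm of index zero from $C^{2,\alpha}$ to $C^{0,\alpha}\times C^{1,\alpha}$ by \cite{ADN64}/\cite{GT}. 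Its cokernel is therefore exactly $K$, paired through the combined interior and boundary integral. Schauder estimates then give a bounded inverse.

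With this isomorphism, the analytic implicit function theorem produces $U$ and an analytic $F:U\to K^\perp$ with $F(0)=0$ and $\mathcal N(\vphi,F(\vphi))=0$. Differentiating $\mathcal N(\vphi,F(\vphi))=0$ at $0$ gives $\nabla F(0)\eta=-(D_\zeta\mathcal N)^{-1}D_\vphi\mathcal N(0,0)\eta$. Here $D_\vphi\mathcal N(0,0)\eta=(P\mathcal L_v\eta,B_g\eta)=0$ since $\eta\in K$, so $\nabla F(0)=0$. Analyticity of $q(\vphi)=\hat Q(\vphi+F(\vphi))$ follows by composition. The chain rule gives $\na q(\vphi)[\eta]=\na\hat Q(\vphi+F(\vphi))[\eta+\na F(\vphi)\eta]$, and the second term vanishes because $\na F(\vphi)\eta\in K^\perp$, which proves \eqref{item: LS property 1}. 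For \eqref{item: prop2}, note that $\na\hat Q$ vanishes on both $K^\perp$ and $K$ exactly when $\na q=0$. Since $\Psi$ is a local diffeomorphism onto $\B$ and Lemma~\ref{lem:bm} identifies critical points of $Q$ on $\B$ with solutions of \eqref{eqn:boundary}, this gives the equivalence.

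For \eqref{item schauder estimates}, differentiate the identity at a general $\vphi\in U$:
\[
D_\zeta\mathcal N(\vphi,F(\vphi))[\nabla F(\vphi)\eta]=-D_\vphi\mathcal N(\vphi,F(\vphi))[\eta].
\]
By the continuity estimate \eqref{prop1}, $D_\zeta\mathcal N$ stays uniformly invertible on $U$ after shrinking $U$, with uniformly bounded inverse. The right-hand side is bounded in $C^{0,\alpha}\times C^{1,\alpha}$ by $C\|\eta\|_{C^{0,\alpha}}$, since $K$ is finite dimensional so all norms on it are equivalent. This yields \eqref{eqn: estimates for DF}.

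The main obstacle I expect is the functional setup at the boundary. The Robin condition has to be built into the codomain so that $D_\zeta\mathcal N$ is a genuine isomorphism, and the $L^2$ orthogonality defining $K^\perp$ must match the cokernel of the boundary value problem. My first attempt would be to use weak formulations in $W^{1,2}$ with the boundary term included, and then upgrade to $C^{2,\alpha}$ by Schauder estimates.
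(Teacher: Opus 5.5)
Your route is the paper's. You run a Lyapunov--Schmidt reduction in $C^{2,\alpha}$ with the Robin condition carried in the codomain, get invertibility from injectivity plus Fredholm index zero, use the analytic implicit (rather than inverse) function theorem, and bound $\na F$ by differentiating the defining identity. Your map $(\vphi,\zeta)\mapsto(P[\cdots],B_gw)$ has the same zero set as the paper's $\mathcal G$. This is because the functionals vanishing on $K^\perp$ are exactly the interior $L^2$ pairings with $K\oplus\mathrm{span}\,v^{2^*-1}$. You also supply the regularity and positivity of $v$, which the paper uses tacitly.

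The step that does not work as written is the codomain. Your phrasing, and the remark that the cokernel is $K$, suggest you take it to be the annihilator of $K$ under the combined pairing $\int_M fk+\int_{\partial M}bk$. If so, neither $\mathcal N$ nor $D_\zeta\mathcal N(0,0)$ takes values there. The projection $P$ already removes the interior pairing with $K$. Meanwhile Green's formula gives $\int_{\partial M}(B_g\zeta)k=-\int_M(\mathcal L_v\zeta)k$ for $\zeta\in K^\perp$ and $k\in K$, and this need not vanish. The correct codomain is $P(C^{0,\alpha}(M))\times C^{1,\alpha}(\partial M)$, with no condition on the boundary datum. This is precisely the paper's quotient $Z=Y/N_W$, since $N_W=(K\oplus\mathrm{span}\,v^{2^*-1})\times\{0\}$.

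Surjectivity onto this space is not literally ``the cokernel is $K$'', but it follows in a few lines:
\begin{enumerate}
\item Given $(f,b)$, solve $(\mathcal L_v\zeta,B_g\zeta)=(f+k+cv^{2^*-1},b)$. Choose $k\in K$ with $\int_M k\kappa=-\int_{\partial M}b\kappa$ for all $\kappa\in K$, so that the Fredholm compatibility conditions hold; this uses $f\perp_{L^2}K$ and $v^{2^*-1}\perp_{L^2}K$.
\item Add an element of $K$ to get $\zeta\perp_{L^2}K$.
\item Choose $c$ so that $\zeta\in T_v\B$. This is possible because $\mathcal L_v v=-(2^*-2)Q(v)v^{2^*-1}$, so the $c$-dependence of $\zeta$ has nonzero pairing with $v^{2^*-1}$ when $Q(v)\neq 0$.
\end{enumerate}
If $Q(v)=0$, the cokernel is $K\oplus\mathrm{span}\,v$ rather than $K$. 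In that case $c$ is fixed by the extra compatibility condition, and adding a multiple of $v$ restores $\zeta\in T_v\B$.

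With the codomain corrected in this way, the rest of your argument goes through.
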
 

If $v$ is no-degenerate, which means that $K=\text{ker} \na^2 Q(v)$ is trivial, then this lemma says nothing. If $K$ is not trivial, this lemme reduces the infinite-dimensional stability problem near a critical point to a finite-dimensional analytic problem, while the complementary directions remain quadratically controlled by the second variation. This reduction has been already performed for Yamabe functional in a variety of contexts(see, e.g. \cite{CCR15} \cite{ENL22} \cite{Ha26}). 

\begin{proof}
    The map $F$ is constructed by solving the $K^\perp$- component of the Euler–Lagrange equation via the implicit function theorem. Since the Hessian is invertible on $K^\perp$, for each kernel parameter $\vphi$ there is a unique correction $F(\vphi) \in K^\perp$ such that the gradient has no $K^\perp$-component at $\Psi(\vphi +F(\vphi))$. Hence the full critical point equation reduces to the $K$-component only, that is, to a finite-dimensional equation for the reduced energy q.

   Let $X=C^{2,\alpha}(M) \cap T_v\B$ with the $C^{2,\alpha}(M)$ norm, $Y=\left( C^{0,\alpha}(M), C^{1,\alpha}(\partial M) \right)$ with the norm $||(f,g)||_Y = ||f||_{C^{0,\alpha}(M)}+||g||_{C^{1,\alpha}(\partial M)}$ and the pairing
   \begin{equation}
       \langle (f,g), \zeta \rangle = \int_M f \zeta d\vol_g + \int_{\partial M} g \zeta d\sigma_g.
   \end{equation}
   Let $K^\perp$ denote the orthogonal complement of $K$ in $T_v\B$ with respect to the $L^2$ inner product, and $W= C^{2,\alpha}(M) \cap K^\perp$. Then $X=K \oplus W$.
   Define $\mathcal{L}_v \phi = L_g \phi -(2^*-1)Q(v)v^{2^*-2} \phi $ and $\mathcal{A} : X \to Y$ with $\mathcal{A}\xi =(\mathcal{L}_v\xi, B_g\xi) $. Then $\langle \mathcal{A} \xi , \zeta\rangle = \frac{1}{2} \na_\B^2 Q(v)[\xi,\zeta]$.

   Let $N_{W} = \{(f,g) \in Y: \langle (f,g), \zeta \rangle =0 ~\text{  for every} ~\zeta \in W \} $. Define the quotient space $Z=Y/N_{W}$ with the minimal norm $||[(f_0,g_0)]||_Z= \min_{(f,g) \in [(f_0,g_0)]}||(f,g)||_Y$. Then the operator $\bar {\mathcal{A}} : W \to Z$ given by $\bar {\mathcal{A}} \zeta =[{\mathcal{A}}\zeta]$ is an isomorphism.

   Indeed, to see injectivity directly, suppose $w\in W$, and $[{\mathcal{A}}w]=0$. Then
   $\frac{1}{2} \na_\B^2 Q(v)[w,\zeta]=\langle \mathcal{A} w , \zeta\rangle =0$ for all $\zeta \in W$. Also, for every $\xi \in K$, $ \na_\B^2 Q(v)[w,\xi]=0$. Therefore $w \in K$. But $w\in W$, so $w=0$. Since $\mathcal{A}$ is an elliptic operator of robin boundary condition,  it is Fredholm of index zero. Together with $Y=\mathcal{A}(W) \oplus N_W$, we see that $\bar{\mathcal{A}}$ is an isomorphism. We also have the following Schauder estimate
   \begin{equation} \label{eq7}
       ||w||_{C^{2,\alpha}(M)} \leq C || \bar {\mathcal{A}} w||_Z \leq C||\mathcal{A} w||_Y,~~  \forall  ~w \in W.
   \end{equation}

   For $\xi \in X$, set $u_\xi = \Psi(\xi)$, and define the residual
   \begin{equation}
       \mathcal{R}(\xi)= ||v+\xi||_{L^{2^*}} ^{-1} (L_g u_\xi -\hat{Q}(\xi)u_\xi ^{2^*-1}, B_gu_\xi) \in Y.
   \end{equation}
   Then $\mathcal{R}(0) = 0$ and  $\na \hat{Q}(\xi)[\zeta] = 2 \langle \mathcal{R}(\xi), \zeta\rangle $ for all $\xi,\zeta \in X$. Furthermore, $\na \mathcal{R}(0)[\zeta] = \mathcal{A} \zeta$ for all $\zeta \in W$.

    Now we construct $F$ by defining $\mathcal{G}: X \to W$ given by $\mathcal{G}(\xi)= \bar{\mathcal{A}}^{-1}(\mathcal{R}(\xi))$ and $\mathcal{N} : X \to X$ given by $\mathcal{N}(\xi)= \pi _K \xi + \mathcal{G}(\xi)$.
    
    Then $\mathcal{N}(0)=0$ and $\na \mathcal{N}(0)[\xi]=\pi _K \xi + \na \mathcal{G}(0)[\xi] =\pi _K \xi + \bar{\mathcal{A}}^{-1}(\mathcal{A}(\xi)) = \pi _K \xi +\pi _{W} \xi$. Hence $\na \mathcal{N}(0) = Id_X$. By the analytic inverse function theorem, after shrinking neighborhoods if necessary, $\mathcal{N}$ has a real analytic inverse $\mathcal{N}^{-1}$ defined in a neighborhood of $0 \in X$. Let $U=K \cap \text{Dom}(\mathcal{N}^{-1})$. For $\vphi \in U$, define 
    \begin{equation}
        F(\vphi)= \pi_{W} \mathcal{N}^{-1} (\vphi).
    \end{equation}
    Then we will introduce some properties of $F$.
    Let us make some initial observations that will be useful in proving the claimed properties of $F$. 
    For any $\vphi \in U$, from the definition of $\mathcal{N}$ we have
    \begin{equation} \label{eq6}
        \vphi = \mathcal{N}(\mathcal{N}^{-1}(\vphi)) = \pi_K \mathcal{N}^{-1}(\vphi) + \mathcal{G}(\mathcal{N}^{-1}(\vphi)).
    \end{equation}
    Taking the $K$-component gives $\pi_K \mathcal{N}^{-1}(\vphi)= \vphi$. Therefore, 
    \begin{equation}
        \mathcal{N}^{-1}(\vphi) =\pi_K \mathcal{N}^{-1}(\vphi) + \pi_{W} \mathcal{N}^{-1} (\vphi) =\vphi +F(\vphi).
    \end{equation}
    Differentiating it, we find that for any $\vphi \in U$ and $\eta \in K$
    \begin{align}
    \pi_K \na \mathcal{N}^{-1}(\vphi)[\eta ] &= \eta, \notag\\
    \pi_{W} \na \mathcal{N}^{-1}(\vphi)[\eta ] &=\na F(\vphi)[\eta]. \label{eq4}
    \end{align}
    Thus $\na F(0)[\eta] = \pi_{W} \na \mathcal{N}^{-1}(0)[\eta ] =\pi_{W} \eta = 0$. Hence  $\na F(0)=0$, and it is clear that $F(0)=0$.

    Now, we define $q(\vphi)=\hat{Q}(\vphi +F(\vphi))$ for $\vphi \in U$. Then $q$ is real analytic because both $\hat{Q}$ and $F$ are real analytic.

    For $\eta \in K$, the chain rule gives
    \begin{equation}
        \na q(\vphi)[\eta] =\na \hat{Q}(\vphi +F(\vphi))[\eta +\na F(\vphi)[\eta]].
    \end{equation}
    Since $\na F(\vphi)[\eta] \in W$ and $\na \hat{Q}(\vphi +F(\vphi)) $ vanishes on $W$ from \eqref{eq6}, we get $\na q(\vphi)[\eta]= \na \hat{Q}(\vphi +F(\vphi))[\eta]$.

    It remains to prove \eqref{eqn: estimates for DF}. Fix $\vphi \in U$ and $\eta \in K$. Set
    \begin{equation}
        \xi_\vphi = \vphi +F(\vphi),~~~z =\na F(\vphi)[\eta] \in W,~~ h=\eta +z.
    \end{equation}
    From \eqref{eq4}, we know $h=\na \mathcal{N}^{-1}(\vphi)[\eta ]$. Then differentiating $\vphi = \mathcal{N}(\mathcal{N}^{-1}(\vphi))$ in the direction $\eta$, we get $\na \mathcal{N}(\xi_\vphi)[h]=\eta$.

    Since $\pi_K h =\eta $ and $\mathcal{N}(\xi)= \pi _K \xi + \mathcal{G}(\xi)$, we obtain $\na \mathcal{G}(\xi_\vphi)[h]=0$, and $\na \mathcal{R}(\xi_\vphi)[h]=0$ since $\mathcal{G}(\xi)= \bar{\mathcal{A}}^{-1}(\mathcal{R}(\xi))$. Therefore $\bar{\mathcal{A}}z =[\mathcal{A}z] = [\mathcal{A}h]
    =[(\mathcal{A}-\na \mathcal{R}(\xi_\vphi))[h]]$. 

    Using the Schauder estimate \eqref{eq7}, we get
    \begin{equation}
        ||z||_{C^{2,\alpha}(M)} \leq C ||\bar{\mathcal{A}}z||_Z  \leq ||(\mathcal{A}-\na \mathcal{R}(\xi_\vphi))[h]|| _Y.
    \end{equation}
    Since  $\na \mathcal{R}(0) =\mathcal{A}$, from \eqref{prop1} there exists a modulus of continuity $\omega(r)$ such that
    \begin{equation}
        ||(\mathcal{A}-\na \mathcal{R}(\xi))(h)||_Y\leq w(||\xi||_{C^{2,\alpha}})||h||_{C^{2,\alpha}}.
    \end{equation}
    As a result 
    \begin{equation}
        ||z||_{C^{2,\alpha}(M)} \leq C w(||\xi||_{C^{2,\alpha}}) (||\eta||_{C^{2,\alpha}(M)} +||z||_{C^{2,\alpha}(M)}).
    \end{equation}
    After shrinking $U$, the coefficient of the last term is at most $\frac{1}{2}$, and we obtain
    \begin{equation}
        ||z||_{C^{2,\alpha}(M)} \leq C ||\eta||_{C^{2,\alpha}(M)} .
    \end{equation}
    Recall that all norms are equivalent on $K$. This concludes the proof.
\end{proof}

Here is a notion related to Lyapunov-Schmidt Reduction.
\begin{definition}[Integrability]\label{def: int} {\rm
	A function $v \in \mathcal{CSC}_1$ is said to be {\it integrable} if for all $\vphi \in \ker \nabla^2_\B Q(v)$ there exists a one-parameter family of functions $(v_t)_{t\in (-\delta,\delta)}$, with $v_0 = v$, $\frac{\partial}{\partial t}\big|_{t=0}v_t = \vphi $ and $v_t \in \mathcal{CSC}_1$ for all $t$ sufficiently small.
	}
\end{definition}

Arguing exactly as in \cite{ENL22}, we find that if $v$ is integrable, then $q$ as
given in the above lemma \ref{lem: LS reduction} is constant in a neighborhood of
$0\in K$, and when $v$ is an integrable minimizer, it holds that 
\begin{equation}
    \M_1 \cap \B(v,\delta) = \mathcal{F}:=\{\Psi(\vphi +F(\vphi)) : \vphi \in U\}.
\end{equation}

\section{Local Quantitative Stability of Minimizers}\label{s:localstability}
 In this section, we establish the local version of Theorem \ref{thm: minimizers}.
 For this we need a localized measure of how far $u$ is close to the minimizer $v$.

Given $\delta >0$ and $v \in \mathcal{M}_1$, %$u\in \B$
we let 
\[
d_\delta(u , \mathcal{M}_1) = \frac{\inf\left\{ \| u -\tilde{v}\|_{W^{1,2}(M)} \mid \tilde{v} \in \mathcal{M}_1\cap \B(v,\delta)\right\}}{\|u\|_{W^{1,2}(M)}}.
\]

\begin{proposition}[Local Stability Estimate]\label{lem: Fuglede}
	Let $(M,g)$ be a compact  Riemannian manifold with boundary, and let $v \in \M_1$. Then there exist constants $c, \g$ and $\delta$ depending on $v$ such that 
	\begin{equation}\label{e: Fuglede}
		Q(u) - Y(M,\partial M, [g]) \geq c \,d_\delta(u, \mathcal{M}_1)^{2+\g}\qquad  \text{for all } u \in \B(v,\delta).
			\end{equation}
	If  $v$ is integrable or non-degenerate, then we may take $\g=0$.  
\end{proposition}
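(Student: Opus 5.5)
We follow the strategy of \cite{ENL22}, combining Lemma \ref{lem: LS reduction} with the \L ojasiewicz inequality for the analytic reduced energy $q$. Fix $v\in\M_1$, and write $u\in\B(v,\delta)$ as $u=\Psi(\xi)$ with $\xi\in T_v\B$, $\|\xi\|_{W^{1,2}}\lesssim\delta$; this is possible since $\Psi$ is a local diffeomorphism with $\nabla\Psi(0)=Id$. Decompose $\xi=\vphi+F(\vphi)+\zeta$, where $\vphi=\pi_K\xi\in U$ and $\zeta:=\xi-\vphi-F(\vphi)\in K^\perp$. Set $u_0:=\Psi(\vphi+F(\vphi))\in\mathcal F$. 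Then
\[
Q(u)-Y=\big(\hat Q(\vphi+F(\vphi)+\zeta)-\hat Q(\vphi+F(\vphi))\big)+\big(q(\vphi)-q(0)\big),
\]
using $q(0)=Q(v)=Y$. We estimate the two brackets separately.

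\emph{Step 1 (transverse part).} Taylor expand $\hat Q$ at $\xi_\vphi=\vphi+F(\vphi)$ in the direction $\zeta$. By Lemma \ref{lem: LS reduction}(1), $\nabla\hat Q(\xi_\vphi)[\zeta]=0$ because $\zeta\in K^\perp$. The second-order term is $\tfrac12\nabla^2\hat Q(\xi_\vphi+s\zeta)[\zeta,\zeta]$. Since $v$ is a minimizer, $\nabla^2_\B Q(v)\ge0$. On $K^\perp$ the Hessian is moreover coercive: $\nabla^2_\B Q(v)[\zeta,\zeta]\ge\lambda\|\zeta\|^2_{W^{1,2}}$. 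This follows from the spectral theory of the Robin problem for $\mathcal L_v$, which has discrete spectrum and a finite-dimensional kernel $K$, together with the positivity of $\nabla^2_\B Q(v)$ for the $L^2$-orthogonal complement. By the $W^{1,2}$ continuity \eqref{prop2} (transported through $\Psi$, whose derivatives are continuous), this coercivity survives with constant $\lambda/2$ at all points within $\delta$ of $v$. The $W^{1,2}$ estimates on $F$ and the smallness of $\xi$ keep the segment $\xi_\vphi+s\zeta$ in that region. Hence the first bracket is at least $c\|\zeta\|^2_{W^{1,2}}\gtrsim\|u-u_0\|^2_{W^{1,2}}$.

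One subtlety: $\zeta$ must be controlled in $W^{1,2}$ while $F$ was built in $C^{2,\alpha}$. We use $\|F(\vphi)\|_{W^{1,2}}\le C\|\vphi\|$ from \eqref{eqn: estimates for DF}, together with the finite dimensionality of $K$.

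\emph{Step 2 (kernel part).} Since $q$ is real analytic on the finite-dimensional space $K$ and $0$ is a local minimum of $q$ (as $v$ minimizes $Q$), we apply the \L ojasiewicz distance inequality. It gives $q(\vphi)-q(0)\ge c\,\mathrm{dist}(\vphi,Z)^{2+\g}$, where $Z=\{q=q(0)\}\cap U$. By Lemma \ref{lem: LS reduction}(2), points of $Z$ near $0$ are minima of $q$, hence critical points, and so $\Psi(\psi+F(\psi))\in\M_1\cap\B(v,\delta)$ for $\psi\in Z$. Using the Lipschitz bound on $F$ and the local Lipschitz property of $\Psi$, we get $\|u_0-\tilde v\|_{W^{1,2}}\le C|\vphi-\psi|$ with $\tilde v=\Psi(\psi+F(\psi))$. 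Hence the second bracket is at least $c\,\|u_0-\tilde v\|^{2+\g}$ for a suitable $\tilde v\in\M_1\cap\B(v,\delta)$.

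In the integrable case $q$ is constant near $0$, so $Z=U$ and the term vanishes; we may take $\g=0$ because only Step 1 matters. In the non-degenerate case $K=0$ and Step 2 is absent.

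\emph{Step 3 (combination).} Since both brackets are nonnegative and $\|u-u_0\|\le\delta\le 1$, we have $\|u-u_0\|^2\ge\|u-u_0\|^{2+\g}$. Therefore
\[
Q(u)-Y\ge c\big(\|u-u_0\|^{2+\g}+\|u_0-\tilde v\|^{2+\g}\big)\ge c'\|u-\tilde v\|^{2+\g},
\]
and $\|u\|_{W^{1,2}}$ is bounded above and below near $v$, so the right side is comparable to $d_\delta(u,\M_1)^{2+\g}$.

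The main obstacle is Step 1. It requires establishing coercivity of the Hessian on $K^\perp$ in $W^{1,2}$ with the boundary term. It also requires checking that the $C^{2,\alpha}$-constructed map $F$ and the decomposition of a merely $W^{1,2}$ function $\xi$ interact correctly, i.e.\ that $\vphi\in K$ is smooth and lies in $U$. We plan to handle both by exploiting that $K$ consists of smooth functions (elliptic regularity for the Robin problem) and is finite dimensional.
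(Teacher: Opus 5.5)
Your proposal is correct and follows the paper's proof step by step. You use the same Lyapunov--Schmidt splitting $Q(u)-Y=I+II$: the transverse part is controlled by coercivity of $\nabla^2\hat Q(0)$ on $K^\perp$ together with the $W^{1,2}$-continuity \eqref{prop2}, the kernel part by the \L ojasiewicz inequality for the analytic reduced energy $q$, and the pieces are recombined by the same triangle-inequality argument using the Lipschitz bounds on $F$ and $\Psi$. The one difference is that you apply the \L ojasiewicz distance inequality to the level set $\{q=q(0)\}$ rather than to the critical set of $q$; this is slightly cleaner, since $q\ge q(0)=Y$ makes points of that set directly elements of $\M_1$, whereas the paper's version tacitly needs that critical points of $q$ near $0$ are minimizers.
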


\begin{proof}[Proof of Proposition \ref{lem: Fuglede}]
Given $v \in \M_1$, let $\Psi$ be the normalized chart centered at $v$. For $u \in \B(v,\delta)$ sufficiently close to $v$ in $W^{1,2}$, there is a unique small $\xi \in T_v \B$ such that $u= \Psi (\xi)$.

Write $\xi = \vphi + w$, where $\vphi \in K $ and $w\in K^\perp$.
Let F be the Lyapunov–Schmidt map from lemma \ref{lem: LS reduction}, and set $z=w-F(\vphi) \in K^\perp$. We can write 
\begin{equation}
    Q(u)-Y(M,\partial M ,[g])= \hat{Q}(\vphi+w) - \hat{Q}(0) = \big(\hat{Q}(\vphi+w)- \hat{Q}(\vphi+F(\vphi)) \big) +\big(\hat{Q}(\vphi+F(\vphi)) -\hat{Q}(0) \big) = I +II.
\end{equation}
For the first term,
the construction of $F$ gives $\na \hat{Q} (\vphi+F(\vphi))[z]=0$. We use Taylor expand
and see that 
\begin{align}\label{e:finalcoerciveestimate}
\hat{Q}(\vphi+w)-\hat{Q}(\vphi +F(\vphi)) &= \na \hat{Q} (\vphi+F(\vphi))[z] + \frac{1}{2} \na ^2\hat{Q}(\xi)[z,z] \notag\\
&=\frac{1}{2} \na ^2\hat{Q}(0)[z,z] + o(1)||z||^2_{W^{1,2}}\notag \\
&\geq \frac{1}{4} \lambda_1||z||^2_{W^{1,2}},
\end{align}
where we used  the continuity \eqref{prop2} of $\na^2  \hat{Q}(-)$ and $o(1)$ is a quantity that goes to zero as $||u-v||_{W^{1,2}}$ goes to zero, $\lambda_1$ is the smallest non-zero eigenvalue of $\na^2 \hat{Q}(0)$, since its eigenvalue is discrete.

Now we turn to the second term $II$ specializing the discussion depending on $v$ being nondegenerate, integrable or non-integrable.

\smallskip

\noindent {\bf $v$ is non-degenerate.} This is the easiest case, since $\vphi=0$ and then $II = 0$.

\smallskip

\noindent {\bf $v$ is integrable.} By the discussion under the definition \ref{def: int}, since $u\in \B(v,\delta)$ is form of $\Psi(\vphi+F(\vphi))$,  we have $II=0$.

\smallskip

\noindent {\bf $v$ is non-integrable.} Recall that $q(\vphi)= \hat{Q}(\vphi+F(\vphi))$. We know that $\vphi \mapsto q(\vphi)$ is an analytic function  $\mathbb R^\ell\rightarrow \mathbb R$ where $\ell = \dim K$. Thus we can apply the \L ojasiewicz inequality\cite{Lio65} : 

\begin{lemma}\label{lo}
Let $q: \mathbb R^\ell \rightarrow \mathbb R$ be a real analytic function and assume that $\nabla q(0) = 0$. Then there exist  $\tilde{\delta} > 0, c > 0$ and $\g > 0$ (all of which depend on $q$ and on the critical point $0$)
%\Luca{We mean on $0$?} ) 
such that for all $\vphi \in B(0,\tilde{\delta})$,
 \begin{equation}\label{e: lojdistance}
|q(\vphi) - q(0)| \geq c \inf\left\{ |\vphi - \bar \vphi | \ : \ \bar \vphi \in B(0,\tilde{\delta}), \, \na q(\bar \vphi ) =0\right\}^{2+\g}.
\end{equation}
\end{lemma}

 Appealing to the definition of $q$ in Lemma~\ref{lem: LS reduction} and   the \L ojasiewicz inequality in  Lemma~\ref{lo}, we see that 
\begin{equation}\label{eqn: intermediate Loj} 
\begin{split}
	II& =
	q(\vphi) - q(0) \\
	& \geq c \inf\{ |\vphi - \hat{ \vphi} | \ : \  \hat{\vphi} \in K\cap B(0,\delta), \na q(\hat\vphi ) =0\}^{2+\g}.
\end{split}	
\end{equation}
It remains to compare the term $|\vphi - \hat{ \vphi} |$ with the distance from $u$ to the local minimizing set. For any $u'\in \M_1 \cap B_\delta(v)$, from the property \eqref{item: prop2} of Lemma \ref{lem: LS reduction} we can write $u'= \Psi(\vphi'+F(\vphi'))$. Hence
\begin{align*}
    ||u-u'||_{W^{1,2}} &\leq C ||\vphi + F(\vphi) + z  - \vphi'-F(\vphi')||_{W^{1,2}}\\
    &\leq C(||\vphi - \vphi'||_{W^{1,2}} +||z||_{W^{1,2}} +||F(\vphi)- F(\vphi')||_{W^{1,2}})\\
    &\leq C(||\vphi - \vphi'||_{W^{1,2}} +||z||_{W^{1,2}}).
\end{align*}
Here we used the property \eqref{eqn: estimates for DF}.
Together with \eqref{e:finalcoerciveestimate}, we have finished the proof.

\end{proof}

\section{Proofs of Theorems~\ref{thm: minimizers}}

In this section, we will prove our main results, that is, Theorems \ref{thm: minimizers}. In section~\ref{s:localstability}, we have proved the local version. As a result, Theorem \ref{thm: minimizers} comes from the compactness argument.(See, e.g. \cite{Ar04} \cite{HL99})

\begin{lemma}\label{lem: cpt}
	Let $(M,g)$ be a compact Riemannian manifold of dimension $n\geq 3$ with boundary such that $Y(M,\partial M,[g]) < Y(S^n_+,\partial S^n_+,[g_0]) $ and let $(u_i)\subset \B$ be a sequence such that $Q(u_i) \to Y$.  Then, up to a subsequence, $u_i$ converges strongly in $W^{1,2}(M)$ to some $v \in \mathcal{M}_1$. 
\end{lemma}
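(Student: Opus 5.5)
The argument is the standard concentration-compactness scheme for minimizing sequences of Escobar's functional, with the strict inequality $Y(M,\partial M,[g]) < Y(S^n_+,\partial S^n_+,[g_0])$ used to exclude bubbling. Write $Y=Y(M,\partial M,[g])$. The plan has three steps.

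\textbf{Step 1: boundedness and a weak limit.} Since $u_i\in\B$, we have $\mathcal E(u_i)=Q(u_i)\to Y$. Trace and Sobolev inequalities give $\int_{\partial M}h_g u^2\le \varepsilon\|\na u\|_{L^2}^2+C_\varepsilon\|u\|_{L^2}^2$. Combined with $\|u_i\|_{L^2}\le C\|u_i\|_{L^{2^*}}=C$, this yields a uniform bound $\|u_i\|_{W^{1,2}}\le C$. Passing to a subsequence, we get:
\begin{itemize}
\item $u_i\rightharpoonup v$ weakly in $W^{1,2}$;
\item $u_i\to v$ strongly in $L^2(M)$ and in $L^2(\partial M)$ (by compactness of the trace into $L^2$);
\item $u_i\to v$ almost everywhere, so $v\ge0$.
\end{itemize}
Set $w_i=u_i-v$. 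Then
\begin{equation*}
\mathcal E(u_i)=\mathcal E(v)+\|\na w_i\|_{L^2}^2+o(1),
\end{equation*}
because the lower-order terms converge. By Brezis--Lieb,
\begin{equation*}
1=\int v^{2^*}+\int |w_i|^{2^*}+o(1).
\end{equation*}

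\textbf{Step 2: the sharp Sobolev--trace inequality (main obstacle).} The key input is the inequality with optimal leading constant on manifolds with boundary, due to Escobar and based on the half-space Sobolev inequality. It states that for every $\varepsilon>0$ there is $C_\varepsilon$ with
\begin{equation*}
\Big(\int_M |w|^{2^*}\Big)^{2/2^*}\le \big(Y(S^n_+,\partial S^n_+,[g_0])^{-1}+\varepsilon\big)\int_M|\na w|^2 + C_\varepsilon\Big(\int_M w^2+\int_{\partial M}w^2\Big).
\end{equation*}
It is proved by a partition of unity: near interior points one uses the sphere constant, which is larger than the hemisphere one; near the boundary one uses Fermi coordinates and even reflection to the half-space. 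We take this as known and cite it. Applied to $w_i$, and using that the $L^2$ terms vanish, it gives $\|\na w_i\|^2\ge (Y_+-\varepsilon)\,b^{2/2^*}+o(1)$, where $Y_+=Y(S^n_+,\partial S^n_+,[g_0])$ and $b=\lim\int|w_i|^{2^*}$. Also $\mathcal E(v)\ge Y a^{2/2^*}$, where $a=\int v^{2^*}$, valid whether $Y$ is positive or not.

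\textbf{Step 3: excluding concentration and concluding.} Taking limits in Step 1 gives
\begin{equation*}
Y\ge Y a^{2/2^*}+(Y_+-\varepsilon)\, b^{2/2^*},\qquad a+b=1.
\end{equation*}
If $Y>0$, concavity of $t\mapsto t^{2/2^*}$ gives $a^{2/2^*}+b^{2/2^*}\ge1$, with equality only when $\{a,b\}=\{0,1\}$. Together with $Y<Y_+$ (and $\varepsilon$ small), this forces $b=0$. If $Y\le 0$, the inequality forces $b=0$ directly, since $a^{2/2^*}\le1$.

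Hence $\int v^{2^*}=1$ and $\mathcal E(v)\le Y$, so $v\in\B$ is a minimizer, i.e. $v\in\M_1$. Moreover $\|\na w_i\|_{L^2}\to0$. Together with the strong $L^2$ convergence, this gives $u_i\to v$ strongly in $W^{1,2}(M)$.
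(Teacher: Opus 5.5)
The paper does not prove this lemma. It states the result and points to the compactness literature (\cite{Ar04}, \cite{HL99}), so there is no in-paper argument to compare against. Your proposal instead gives the standard self-contained Brezis--Lieb/concentration-compactness argument, and it is correct.

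The steps all check out:
\begin{itemize}
\item the uniform $W^{1,2}$ bound via the $\varepsilon$-trace inequality;
\item the splitting $\mathcal E(u_i)=\mathcal E(v)+\|\na w_i\|_{L^2}^2+o(1)$ together with Brezis--Lieb;
\item the bound $\mathcal E(v)\ge Y a^{2/2^*}$, which holds for either sign of $Y$ because $v\ge 0$;
\item the case analysis forcing $b=0$.
\end{itemize}
Your constant is also the right one: $Y_+^{-1}=2^{2/n}\,Y(S^n,[g_0])^{-1}$ is exactly the sharp half-space Sobolev constant.

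The only external input is Cherrier's almost-sharp Sobolev inequality on manifolds with boundary. It holds with only $C_\varepsilon\int_M w^2$ in the remainder, so your extra boundary term is unnecessary but harmless.

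Two small points to tidy:
\begin{itemize}
\item Pass to a further subsequence so that $b=\lim\int|w_i|^{2^*}$ and $\lim\|\na w_i\|_{L^2}^2$ actually exist.
\item Say explicitly that once $a=1$, $v$ is admissible, so $\mathcal E(v)\ge Y$. Hence $\lim\|\na w_i\|_{L^2}^2=Y-\mathcal E(v)\le 0$, which gives the strong convergence.
\end{itemize}

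Compared with deferring to compactness results for solutions, as the cited references do, your route is more elementary. It delivers exactly what the paper uses later: strong subsequential convergence of arbitrary minimizing sequences in $\B$, not merely of critical points. That gives both the compactness of $\M_1$ and the contrapositive used in the second case of the proof of the main theorem.
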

We now prove Theorem~\ref{thm: minimizers}.
\begin{proof}[Proof of Theorem~\ref{thm: minimizers}]
Since both sides of \eqref{e: minimizers} are zero-homogeneous in $u$ and because  $\inf\{ \| u -v\|_{W^{1,2}(M)} : v\in \M_1\} \geq d(u, \M)$,  we may work in $\B$ without loss of generality.

Given $v \in \M_1,$ let $\delta(v)$,  $\g(v)$, and $c(v)$ be the constants given in Proposition~\ref{lem: Fuglede}.
%and recall fromthat these constants are invariant under multiplying $v$ by a constant. Therefore, 
Since the set $\M_1 = \{  v \in \M : \| v\|_{L^{2^*}(M)}=1\}$ is compact in $W^{1,2}$ by Lemma~\ref{lem: cpt}, we may cover $\M_1$ by balls $\B(v, \delta(v)/2)$ and take a finite subcover $\{\B(v_i, \delta(v_i)/2)\}_{i\in \mathscr I}$. Then we define
\begin{align*}
\delta_0 &= \min_{i\in \mathscr I} \delta(v_i)/2 > 0,\\
\g_0 & = \max_{i\in \mathscr I}  \g(v_i)< \infty,\\
c_0 & = \min_{i\in \mathscr I}  c(v_i) >0.
\end{align*}

Let $u\in \B$ ,We now consider two cases:  either $d(u, \mathcal{M}_1) \leq \delta_0/4$ or $d(u, \mathcal{M}_1) > \delta_0/4$.
If $d(u, \mathcal{M}_1) \leq \delta_0/4$,  
there exists an $i \in \mathscr I$ such that $\|u -v_i\|_{W^{1,2}} < \delta_i/2$. We may take $\tilde{v}$ as the closest element of $\M_1$ to $u$ and use the triangle inequality to get  $\|\tilde{v}-v_i\|_{W^{1,2}} < \delta_i$. Thus local quantitative stability implies, cf. proposition \ref{lem: Fuglede}
$$Q(u) - Y(M,,\partial M,[g]) \geq c(v_i)d_{\delta_i}(u, \mathcal{M}_1)^{2+\g_i} \geq c_0d(u, \mathcal M_1)^{2+ \g_0}.
$$ 

On the other hand, we consider the case $d(u, \mathcal M_1) > \delta_0/4$. By (the contrapositive of) Lemma~\ref{lem: cpt}, we get $\exists \e>0$ depends on $\delta_0$ such that
\begin{equation}
    d(u, \mathcal M_1) > \delta_0/4 \Rightarrow Q(u)-Y(M,[g]) > \varepsilon.
\end{equation}
Moreover, observing that by definition, $d(u, \mathcal M) \leq  1$, we have $Q(u)-Y(M,\partial M,[g]) \ge \epsilon d(u, \M)^{2+\gamma_0}$.

 Letting $c = \min\left\{c_0, \varepsilon\right\}$ we have proven the stability estimate \eqref{e: minimizers} for all $u \in \B$. 

\end{proof}
\textbf{Acknowledgments}
 \quad We would like to thank Prof.Yuxin Ge for his advanced guidance.

\end{document}